\theoremstyle{plain}
\newtheorem{Theorem}{Theorem}[section]
\newtheorem{Corollary}[Theorem]{Corollary}
\newtheorem{Proposition}[Theorem]{Proposition}
\newtheorem{Question}[Theorem]{Question}
\theoremstyle{definition}
\newtheorem{Definition}[Theorem]{Definition}
\newtheorem{Example}[Theorem]{Example}
\theoremstyle{Remark}
\newtheorem{Remark}{Remark}
\newcommand{\surjects}{\twoheadrightarrow}
\def\fm{{\mathfrak m}}
\def\NN{\mathbb N}
\def\fp{{\mathfrak p}}
\def\fq{{\mathfrak q}}
\def\hht{{\rm ht}\,}
\def\depth{{\rm depth}\,}
\def\proj#1{{\rm Proj}\, (#1)}
\def\pp{{\mathbb P}}
\def\AA{{\mathbb A}}
\begin{document}

\title[The Aluffi Algebra of a hypersurface with isolated singularity]{The Aluffi Algebra of a hypersurface with isolated singularity}
\author[A. Nasrollah Nejad  ]{  abbas Nasrollah nejad }
\address{department of mathematics institute for advanced studies in basic sciences (IASBS) p.o.box 45195-1159  zanjan, iran.
}
\address{School of Mathematics, Institute for Research in Fundamental Sciences (IPM), P.O.Box 19395-5746, Tehran, Iran
}
\email{abbasnn@iasbs.ac.ir}

\subjclass[2010]{primary 14C17, 14H50, 14H20; secondary 14M10, 13H10, 13A30}
\keywords{Aluffi algebra, Blowup algebra, Jacobian Ideal, Isolated singularity}
\begin{abstract}
The Aluffi algebra is algebraic definition of characteristic cycles of a hypersurface  in intersection theory. In this paper we focus on  the Aluffi algebra of quasi-homogeneous and locally Eulerian hypersurface with isolated singularities. We prove that the Jacobian ideal of an affine hypersurfac with isolated singularities is of linear type if and only if it is locally Eulerian.  We show that the gradient ideal of a projective hypersurface  is of linear type if and only if the corresponding affine curve in the affine chart associated to singular points  is locally Eulerian. We prove that the gradient ideal of the  Nodal and Cuspidal projective plane curves are of linear type. 
\end{abstract}
\maketitle
\section*{Introduction}
 In  \cite{AA} P. Aluffi introduced a graded algebra to describe the characteristic cycle of a hypersurface parallel to well known conormal cycle in intersection theory. Let $Y\subseteq X\subseteq M$, where $X$ is a hypersurface in the smooth variety $M$ and $Y$ is singular subscheme of $X$.  Aluffi constructed a graded $\mathcal{O}_X$-algebra, $\mathrm{qSym}_{X\subset M}(\mathcal{J}_{Y,X})$ which he called \textit{quasi-symmetric algebra}, defined by
\[\mathrm{qSym}_{X\subset M}(\mathcal{J}_{Y,X}):=\mathrm{Sym}_{\mathcal{O}_X}(\mathcal{J}_{Y,X})\otimes_{\mathrm{Sym}_{\mathcal{O}_M}(\mathcal{J}_{Y,M})} \mathcal{R}_{\mathcal{O}_M}(\mathcal{J}_{Y,M}),\]
where $\mathrm{Sym}_{\mathcal{O}_M}(\mathcal{J}_{Y,X})$ is the symmetric algebra of the ideal sheaf $\mathcal{J}_{Y,X}$ of $Y$ in $X$ and  $\mathcal{R}_{\mathcal{O}_M}(\mathcal{J}_{Y,M})$ is the Rees algebra of the ideal sheaf $\mathcal{J}_{Y,M}$ of $Y$ in $M$. Aluffi proved that the characteristic cycle of $X$ is
\begin{equation}\label{CHC}
{\rm Ch}(X)=(-1)^{\dim X}\left[\proj{\mathrm{qSym}_{X\subset M}(\mathcal{J}_{Y,M})}\right]. 
\end{equation}
One of the Aluffi's main result \cite[Theorem 4.4]{AA} is that the (dual) Chern-Mather and the (dual) Schwartz-MacPherson classes of $X$ are (up to sing) the shadows of the cycles $[\proj{\mathcal{R}_{\mathcal{O}_X}(\mathcal{J}_{Y,X})}]$ and $[\proj{\mathrm{qSym}_{X\subset M}(\mathcal{J}_{Y,M})}]$ in $\mathbb{P}(T^*M_{|X})$. 

Inspired by this construction, the author and A. Simis have explored its algebraic flavors, naming it the Aluffi algebra of a pair of ideals \cite{AA}( See also \cite{Thesis1}). Let $R$ be a commutative ring and $J\subset I$ a pair of ideals in $R$. The  \textit{ (embedded) Aluffi algebra} of $I/J$ is 
\[{\mathcal A}_{R/J}(I/J)=\mathrm{Sym}_{R/J}(I/J)\otimes_{\mathrm{Sym}_R(I)} {\mathcal R}_R(I). \]
The Aluffi algebra is algebraic version of characteristic cycle of a hypersurface. As seen in (\cite[Theorem 1.8 and Proposition 2.11]{AA}), in  order that the Aluffi algebra have a certain expected behavior over a regular ambient ring, the ideal $J$ had better be principal. This may be taken as a motivation for this work. Another source for motivation, as mentioned above, is the original work of Aluffi, where he has inquired into the structure of the algebra in the case that the ideal $J=(f)$ is generated by equation of a reduced hypersurface. 

Let  $R=k[x_1,\ldots,x_n]$  be the polynomial ring in $n\geq 2$ variables over an algebraically closed filed $k$ of characteristic zero and  $f\in R$  a reduced polynomial which defines the hypersurface $X=V(f)$. We regard the non-singular hypersurfaces  as uninteresting case and assume that $X$ is singular. The singular subscheme  of the hypersurface $X$ is described  by the ideal $I(f)=(f,J(f))$, the so called \textit{Jacobian ideal} of $f$ where $J(f)=(\partial f/\partial X_1,\ldots, \partial f/\partial X_n)$ is the \textit{gradient ideal} of $f$. 

An ideal $I$ in a commutative ring $A$ is called of \textit{linear type} if the symmetric and the Rees algebra of $I$ are isomorphic. By definition of the Aluffi algebra, if $I(f)$ is of linear type then the Aluffi algebra of $I(f)/(f)$ is isomorphic with the symmetric algebra.  Then by formula (\ref{CHC}) for computing characteristic cycle of the hypersurface $X=V(f)$, we need just compute cycles of the naive blowup. In particular, for Schwartz-MacPherson classes of the hypersurface $X$ we need shadows of the cycles $[\proj{\mathrm{ Sym}_{R/(f)}(I(f)/(f))}]$. By the results of Hunecke and Rossi  \cite{HR} we have more information about the symmetric algebra, especially the  minimal associated primes. The principal question which  motivated this paper is: when the Jacobian ideal of a hypersurface is of linear type? 

Assume that  $f$ is a reduced homogeneous polynomial and the projective hypersurface $X=V(f)\subset \mathbb{P}_k^{n-1}$  has isolated singularities. There is a central structural backstage for the Aluffi algebra and linear type property of $I(f)$ \cite[Corollary 3.2 and Theorem 3.3]{AA}. In fact, the gradient ideal $J(f)$ is of linear type if and only if the coordinates of the vector fields of $\pp_k^{n-1}$ vanishing on $f$ generate an irrelevant ideal in $R$.  

In this work, first we focus on the structure of  the Aluffi algebra of an affine hypersurface, especially the linear type property of the Jacobian and gradient  ideals. The outline of the paper is as follow: In section 1, we study the structure of the Aluffi algebra for quasi-homogeneous hypersurfaces. In Proposition \ref{Quasi-homogeneous}, we prove that the Aluffi algebra of a quasi-homogeneous hypersurface is isomorphic with the symmetric algebra, Aluffi algebra is Cohen-Macaulay and we determine the minimal associated primes. An affine hypersurface $X=V(f)\subseteq \AA_k^n$ with isolated singularity is called \textit{locally Eulerian} if $f\in J(f)_{\fm}$ for every $\fm\in\mathrm{Sing}(X)$. One of the main results is Theorem \ref{LE}. We prove that $X$ is locally Eulerian if and only if $I(f)$ is locally a complete intersection at singular points which equivalent  that $I(f)$ is of linear type. In the sequel, we consider the affine plane curve $X=V(x^a+x^cy^d+y^b)\subseteq \AA_k^2$. We determine when $X$ is locally Eulerian in terms of powers $(a,c,d,b)\in k^4$.  

In section 2, We study linear type property of a reduced projective hypersurface. We say that a reduced projective hypersurface $X=V(f)\subseteq \pp_k^n$ is of \textit{gradient linear type} if $J(f)$ is of linear type. In Corollary \ref{LTcriterion}, we show that $X$ is of gradient linear type if and only if the corresponding affine hypersurface in the affine chart associated to a singular point is locally Eulerian. By using this results in Theorem \ref*{nodal-cuspidal}, we prove that projective plane curves with only  $\mathrm{A}_k$ singularities with $k\geq 1$ are of gradient linear type.  

\medskip

\section{Affine Hypersurfaces}
Throughout this section $k$ is an algebraically closed filed of characteristic zero and $R=k[x_1,\ldots,x_n]$ the polynomial ring over $k$ with $n\geq 2$. Let $f\in R$ be a reduced  polynomial which defines a reduced affine  hypersurface $X=V(f)\subseteq \AA_k^n$.  Let  $$J(f)=(\partial f/\partial x_1,\ldots, \partial f/\partial x_n)\subset R,$$ the so-called the gradient ideal of $f$. The singular locus of $X$ is defined by the Jacobian ideal $I(f)=(f,J(f))$. We say that $X$ has \textit{isolated singularity} if the singular locus of $X$ consist of finitely many points. If $X$ has only isolated singularities  then $\hht I(f)=n$. The \textit{Milnor number} $\mu(f)$ and \textit{Tujurina number} $\tau(f)$ of $f$ at a singular point $\fm$ are the  $k$-vector space dimension of the local Milnor algebra $R_{\fm}/J(f)_{\fm}$ and the local Tjurina algebra  $R_{\fm}/I(f)_{\fm}$, respectively. 
\begin{Remark}\label{Regular_sequence}\rm
If $f\in R$ is a reduced polynomial which defines a hypersurface with isolated singularities. Then by \cite[Lemma 4]{Michler} the codimension of $J(f)_{\fm}$ is $n$. Since $R_{\fm}$ is a local Cohen-Macaulay ring, it follows that  the partial derivatives $\partial f/\partial x_i$  localized at  singular point $\fm$ form an $R_{\fm}$-regular sequence. In particular, $R_{\fm}/J(f)_{\fm}$ is Artinain Gorenstein ring. 
\end{Remark}
In the sequel, we study the structure of the Aluffi  algebra and the Jacobian ideal  for quasi-homogeneous and locally Eulerian hypersurfaces. 
\subsection{Quasi-homogeneous hypersurface}
 Recall that a polynomial $f\in R$ is \textit{quasi-homogeneous} of degree $d$ with weight $r_i$ if it is satisfies :
\[f=\sum_{i=1}^n (r_i/d)x_i\partial f/\partial x_i. \] 

A reduced hypersurface $X\subseteq \AA_k^n$ is called \textit{quasi-homogeneous} if the polynomial defining $X$ is quasi-homogeneous. We have the following central backstage for the Aluffi  algebra of quasi-homogeneous hypersurface with isolated singularities. 

\begin{Proposition}\label{Quasi-homogeneous}
Let $X=V(f)\subseteq \AA_k^n$ be a reduced quasi-homogeneous hypersurface with isolated singularities. Then 
\begin{enumerate}
\item[{\rm (a)}] The Aluffi   algebra of $I(f)/(f)$  is isomorphic with the symmetric algebra. In Particular the Aluffi algebra has the following representation
\[{\mathcal A}_{R/(f)}(I(f)/(f))\simeq R[\mathbf{T}]/(f,\sum_{i=1}^{n}(r_i/d)x_iT_i,\mathcal{L}) \]
where $\mathcal{L}=(\partial f/\partial x_iT_j-\partial f/\partial x_jT_i\ : \  1\leq i<j\leq n)$. 

\item[{\rm (b)}] The Aluffi algebra of $I(f)/(f)$ is Cohen-Macaulay. 

\item[{\rm (c)}] The minimal primes of the Aluffi algebra on ${\mathcal R}_R(I(f))$ are 
\begin{itemize}
\item The minimal prime ideals of ${\mathcal R}_{R/(f)}(I(f)/(f))$ all of the form $\sum_{t\geq 0} (g)\cap I(f)^t$ for a prime factors $g$ of $f$. 
\item The extended ideal $(\mathbf{x}){\mathcal R}_R(I(f))$.

\end{itemize}
\end{enumerate}
\end{Proposition}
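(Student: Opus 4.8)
The plan is to exploit the Euler relation to collapse the two ideals and reduce everything to the Rees algebra of a complete intersection.

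\emph{Part (a).} First I would observe that the quasi-homogeneity identity $f=\sum_{i=1}^n (r_i/d)\,x_i\,\partial f/\partial x_i$ already exhibits $f$ as an element of $J(f)$, so that $I(f)=(f,J(f))=J(f)$. Because $X$ has isolated singularities and $f$ is quasi-homogeneous, the common zero locus of the partials lies in $X$ and coincides with $\mathrm{Sing}(X)$, hence is finite; therefore $\hht J(f)=n$. Since $J(f)$ is generated by the $n$ partials $\partial f/\partial x_i$ and $R$ is Cohen--Macaulay, an ideal generated by $r$ elements of height $r$ is generated by a regular sequence (Krull's height theorem forces $\hht(\partial f/\partial x_1,\dots,\partial f/\partial x_i)=i$ for each $i$, which is the classical criterion). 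Thus $I(f)=J(f)$ is a complete intersection, in particular of linear type, and since the companion ideal $(f)$ is principal the structural result quoted in the introduction gives $\mathcal A_{R/(f)}(I(f)/(f))\simeq \mathrm{Sym}_{R/(f)}(I(f)/(f))$. For the explicit presentation I would compute the syzygies of the images $\overline{\partial f/\partial x_i}$ over $S:=R/(f)$: a tuple $(a_i)$ is a syzygy exactly when $\sum \tilde a_i\,\partial f/\partial x_i=hf$ for lifts $\tilde a_i$ and some $h\in R$; substituting $f=\sum (r_i/d)x_i\,\partial f/\partial x_i$ yields $\sum(\tilde a_i-h(r_i/d)x_i)\,\partial f/\partial x_i=0$, which, the partials being an $R$-regular sequence, is Koszul. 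Hence the $S$-syzygies are generated by the single Euler syzygy $((r_i/d)x_i)_i$ together with the Koszul syzygies, giving $\mathcal A_{R/(f)}(I(f)/(f))\simeq R[\mathbf T]/(f,\sum_{i=1}^n (r_i/d)x_iT_i,\mathcal L)$.

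\emph{Part (b).} Here I would pass to the Rees description. Under the isomorphism $\mathcal R_R(I(f))\simeq R[\mathbf T]/\mathcal L$ coming from linear type, with $T_i\mapsto (\partial f/\partial x_i)t$, the Euler form $\sum (r_i/d)x_iT_i$ maps to $ft$, so that $\mathcal A_{R/(f)}(I(f)/(f))\simeq \mathcal R_R(I(f))/(f,ft)$; since $I(f)$ is a complete intersection, $\mathcal R_R(I(f))$ is Cohen--Macaulay of dimension $n+1$. The main obstacle is that $(f,ft)$ is \emph{not} a regular sequence: set-theoretically $V(f,ft)=V(f)$, so this ideal has height one, and the depth lemma only gives $\depth\mathcal A\ge n-1$, whereas the claim is $\depth\mathcal A=\dim\mathcal A=n$. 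To close this gap I would resolve $M:=I(f)/(f)$ over $R$ by a mapping cone built from the Koszul complex of the regular sequence and the inclusion $(f)\hookrightarrow I(f)$, then feed this into the approximation-complex machinery of Herzog--Simis--Vasconcelos: the complete-intersection hypothesis makes the relevant Koszul homology Cohen--Macaulay (sliding depth holds), so the approximation complex is acyclic and computes $\mathrm{Sym}_{R/(f)}(M)$ with the sharp depth $n$. This depth computation, improving $n-1$ to $n$, is the technical heart of the Proposition.

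\emph{Part (c).} Finally, since $\dim\mathcal A=n$ and $\sqrt{(f,ft)}=\sqrt{(f)}$, the minimal primes of $\mathcal A=\mathcal R_R(I(f))/(f,ft)$ are exactly the minimal primes of the domain $\mathcal R_R(I(f))$ over the principal ideal $(f)$, each of height one by Krull. I would split them into two families. For each prime factor $g$ of the reduced polynomial $f$, the contraction $\mathfrak P_g=\sum_{t\ge 0}((g)\cap I(f)^t)$ is the kernel of $\mathcal R_R(I(f))\to\mathcal R_{R/(g)}(\overline{I(f)})$; by the standard theory of Rees algebras of the quotient these are precisely the minimal primes of $\mathcal R_{R/(f)}(I(f)/(f))$, giving the first bullet. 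The remaining minimal prime is the fiber-cone prime $(\mathbf x)\mathcal R_R(I(f))$: its residue ring is the special fiber $\mathcal F(I(f))$, which for a regular sequence is the polynomial ring $k[T_1,\dots,T_n]$ (a regular sequence is analytically independent), so $(\mathbf x)\mathcal R_R(I(f))$ is prime of dimension $n=\ell(I(f))$, i.e. of height one; it contains $f$ because $f\in(\mathbf x)$, hence it is minimal over $(f)$ and yields the second bullet. Matching this against the Huneke--Rossi description of the minimal primes of a symmetric algebra confirms that these are all of them.
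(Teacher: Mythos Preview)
Your argument is correct and for part (a) it coincides with the paper's: both note that quasi-homogeneity forces $I(f)=J(f)$ to be a height-$n$ ideal on $n$ generators, hence a complete intersection, so linear type gives $\mathcal A\simeq\mathrm{Sym}$, and the presentation follows from the Koszul-plus-Euler syzygies.

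For (b) you and the paper both land in the Herzog--Simis--Vasconcelos framework, but the paper takes a shorter path: it invokes directly the numerical criterion \cite[Theorem~10.1]{Trento}, which asks for $\mu((I(f)/(f))_{P/(f)})\le\hht(P/(f))+1$ together with a depth condition on the Koszul homology $H_i$; since the only primes over $I(f)$ are the maximal ideals at the singular points, both conditions reduce to inequalities of the form $n\le n$ and $\depth\ge(\text{negative number})$ and are vacuous. Your detour through $\mathcal R/(f,ft)$ and the honest observation that $(f,ft)$ is not a regular sequence is instructive but unnecessary; the sliding-depth/approximation-complex machinery you then invoke is exactly what underlies the cited theorem, so you are reproving what the paper simply quotes.

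For (c) the paper gives no argument at all, merely citing \cite[Propositions~2.8 and 1.9(ii)]{AA}. Your direct approach---using the Cohen--Macaulayness from (b) to force all minimal primes of $\mathcal R/(f,ft)$ to have height one, hence to lie among the height-one primes over $(f)$, and then checking that each $\mathfrak P_g$ and $(\mathbf x)\mathcal R$ indeed contains $ft$---is a legitimate unpacking of those references. One small caution: your sentence ``$\sqrt{(f,ft)}=\sqrt{(f)}$'' is stronger than what you actually use or prove; what you need (and what the height-one constraint gives) is only that every minimal prime of $(f,ft)$ is minimal over $(f)$, and then the Huneke--Rossi check at the end closes the list.
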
 
\begin{proof}
(a) Since $f$ is quasi-homogeneous  it follows that $I(f)$ is generated by $R$-regular sequence. Hence $I(f)$ is of linear type and the Aluffi algebra is isomorphic with symmetric algebra. By the definition of the Aluffi algebra one has
\[{\mathcal A}_{R/(f)}(I(f)/(f))\simeq \mathrm{ Sym}_{R/(f)}(I(f)/(f))\simeq \mathrm{ Sym}_R(I(f))/(f,\widetilde{f})\mathrm{ Sym}_R(I(f)),\]
where $\widetilde{f}$ is degree one of $f$ in the symmetric algebra. By using a representation of the symmetric algebra 
\[\mathrm{ Sym}_R(I(f))\simeq R[\mathbf{T}]/I_1([T_1\ \ldots \ T_n].\phi),\] 
where $I_1([T_1\  \ldots\  T_n].\phi)$  is ideal generated by $1$-minors of product of $1\times n$ variable matrix $[T_1\ \ldots \ T_n]$ with the first syzygy matrix $\phi$ of $I(f)$. Since $I(f)$ is generated by a regular sequence then $I(f)$ only have Koszul syzygies. Then all together, we get such representation.

(b) We apply the Criterion \cite[Theorem 10.1]{Trento}. We have to verify the following conditions:
\begin{enumerate}
\item[(I)]  $\mu ((I(f)/(f))_{P/(f)}) \leq \hht (P/(f))+1=\hht(P)$, for every  prime ideal $P$ contain $I(f)$ . 
\item[(II)] ${\rm depth}(H_i)_{P/(f)}\geq \hht (P)- \mu ((I(f)/(f))_{P/(f)})+i-1$, for every  prime ideal $I(f)\subset P$ and every $i$ such that $0\leq i \leq \mu ((I(f)/(f))_{P/(f)})- \hht ((I(f)/(f))_{P/(f)})$ where $H_i$ denotes the $i$th Koszul homology module of the partial derivatives on $R/(f)$. 
\end{enumerate}
The prime ideals contain $I(f)$ are maximal ideal which correspond with singular points. Since $I(f)$ is generated by $n$ elements, then in local ring $R/(f)_{P/(f)}$, we have $\mu ((I(f)/(f))_{P/(f)})=\mu(I(f)_P)=n$ and $\hht ((I(f)/(f))_{P/(f)})=\hht(P/(f))=n-1$.  Then the condition (I), (II) trivially verified. 

(c) This ia an immediate translation of \cite[Propositions 2.8 and 1.9(ii)]{AA}.  
\end{proof}
\begin{Example}\rm
Let $f\in R=k[x,y]$ be a reduced quasi-homogeneous polynomial. By \cite[Proposition 2.5]{ss}, $f$ has the following form:
\[f=ax^{sq}+by^{sp}+\sum_{1\leq r<s}c_rx^{(s-r)q}y^{rp}.\]
where $p,q,s\in \mathbb{N}_{>0}$ such that $\gcd(p,q)=1$. 
The hypersurface   defined by $f$ has singularities if and only if one of the following cases hold:
\begin{itemize}
\item[\rm (1)] $a,b\neq 0$ and $s\geq 2$.
\item[\rm (2)] $a,b=0$ and $s=2$.
\end{itemize}
Then the Aluffi algebra of a quasi-homogeneous plane curve $X=V(f)\subseteq \mathbb{A}_k^2$ is  Cohen-Macaulay and isomorphic with the symmetric algebra.  
\end{Example}

\subsection{Locally Eulerian hypersurface}
We start with the definition of locally Eulerian hypersurface. 
\begin{Definition}
A hypersurface $X=V(f)\subseteq \AA_k^n$ with isolated singularities  is called\textit{ locally Eulerian} if $f\in J(f)_{\fm}$ for every maximal ideal $\fm$ correspond with singular points.  
\end{Definition}
Clearly, any quasi-homogeneous hypersurface is locally Euelrian but the converse is not holds, for example the plane curve defined by $f(x,y)=xy+x^3+y^3\in k[x,y]$ which has one singular point at origin is locally Eulerian, but it is not quasi-homogeneous. If an affine hypersurface $X=V(f)$ is locally Eulerian then the Milnor number of $f$ at each singular point is equal to Tjurina number. 

In what follow we say that a reduced affine  hypersurface $X=V(f)\subset \AA_k^n$  is of \textit{Jacobian  liner type}  if the Jacobian  ideal $I(f)$ is of linear type. The nonsingular and quasi-homogeneous affine hypersurfaces are of  Jacobian linear type. In the following result, we characterize Jacobian linear type affine hypersurfaces with isolated singularities. 
\begin{Theorem}\label{LE}
Let  $X=V(f)\subseteq \AA_k^n$ be a hypersurface with isolated singularities.  The following are equivalent:
\begin{enumerate}
\item[\rm (a)] The hypersurface $X$ is locally Eulerian.
\item[\rm (b)] The Jacobian ideal $I(f)$ is locally a complete  intersection at singular points. 
\item[\rm (c)] The hypersurface $X$  is of Jacobian linear type. 
\end{enumerate}
\end{Theorem}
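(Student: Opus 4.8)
The plan is to reduce every assertion to the local rings $R_{\fm}$ at the finitely many singular points, exploiting that, by Remark \ref{Regular_sequence}, the partial derivatives $\partial f/\partial x_i$ form an $R_{\fm}$-regular sequence; thus $J(f)_{\fm}$ is a complete intersection of height $n$ and $A:=R_{\fm}/J(f)_{\fm}$ is Artinian Gorenstein. With this in hand the implication (a)$\Rightarrow$(b) is immediate: if $f\in J(f)_{\fm}$ then $I(f)_{\fm}=J(f)_{\fm}$, which is a complete intersection.

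For (b)$\Leftrightarrow$(c) I would first record that being of linear type is a local property, since the kernel of the canonical surjection $\mathrm{Sym}_R(I(f))\surjects \mathcal R_R(I(f))$ commutes with localization; hence $I(f)$ is of linear type if and only if $I(f)_P$ is of linear type for every prime $P$. Away from the singular locus $I(f)_P=R_P$ is the unit ideal, trivially of linear type, so only the maximal ideals $\fm\in\mathrm{Sing}(X)$ matter. This gives (b)$\Rightarrow$(c) at once, because a complete intersection (a regular sequence) is of linear type. For (c)$\Rightarrow$(b) I would use that linear type forces the special fibre $\mathcal R_R(I(f))\otimes_R k$ to coincide with $\mathrm{Sym}_k\!\big(I(f)_{\fm}/\fm I(f)_{\fm}\big)$, a polynomial ring in $\mu(I(f)_{\fm})$ variables; hence the analytic spread satisfies $\ell(I(f)_{\fm})=\mu(I(f)_{\fm})$. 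Since $\ell(I(f)_{\fm})\le\dim R_{\fm}=n$ while $\hht I(f)_{\fm}=n\le\mu(I(f)_{\fm})$, we obtain $\mu(I(f)_{\fm})=n$; as $I(f)_{\fm}$ is then $\fm$-primary and generated by $n$ elements in the $n$-dimensional Cohen--Macaulay local ring $R_{\fm}$, these form a system of parameters, hence a regular sequence, so $I(f)_{\fm}$ is a complete intersection.

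The crux is (b)$\Rightarrow$(a). Assuming $I(f)_{\fm}$ a complete intersection we have $\mu(I(f)_{\fm})=n$, so among the $n+1$ generators $f,\partial f/\partial x_1,\dots,\partial f/\partial x_n$ there is a single $k$-linear dependence modulo $\fm I(f)_{\fm}$. In the favorable case $f$ itself is redundant, i.e. $f\in J(f)_{\fm}+\fm I(f)_{\fm}$; writing $I(f)_{\fm}/J(f)_{\fm}=(\bar f)A$ this reads $\bar f\in\fm(\bar f)A$, so $(\bar f)A=\fm(\bar f)A$ and Nakayama's lemma yields $\bar f=0$, that is $f\in J(f)_{\fm}$, as wanted. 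The main obstacle is to exclude the other possibility, that the dependence involves only the partial derivatives, so that some $\partial f/\partial x_i$ rather than $f$ is redundant; by the modular law this is exactly the condition $\fm f\cap J(f)_{\fm}\not\subseteq\fm J(f)_{\fm}$, i.e. the existence of $m\in\fm$ with $mf\in J(f)_{\fm}\setminus\fm J(f)_{\fm}$. I expect to rule this out by combining a leading-form (order) analysis, which shows such a relation would force a nontrivial constant-coefficient syzygy among the leading forms of the $\partial f/\partial x_i$, with the Artinian Gorenstein duality on the Milnor algebra $A$ and the isolated-singularity hypothesis (concretely, with the equivalence $f\in J(f)_{\fm}\Leftrightarrow \mu=\tau$). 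This reduction to a statement purely about $A$, and its resolution, is the technical heart of the argument.

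Together these steps close the loop (a)$\Rightarrow$(b)$\Leftrightarrow$(c) and (b)$\Rightarrow$(a), establishing the equivalence of (a), (b) and (c).
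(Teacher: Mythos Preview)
Your handling of (a)$\Rightarrow$(b) and of (b)$\Leftrightarrow$(c) is correct and matches the paper's argument; in particular your (c)$\Rightarrow$(b) via $\ell(I(f)_{\fm})=\mu(I(f)_{\fm})\le\dim R_{\fm}=n=\hht I(f)_{\fm}$ is exactly the content of the reference \cite[Proposition~2.4]{Trento} that the paper cites.

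The genuine gap is in (b)$\Rightarrow$(a). You correctly isolate the obstacle: after Nakayama one must exclude the possibility that the redundant generator among $f,\partial f/\partial x_1,\dots,\partial f/\partial x_n$ is one of the partial derivatives rather than $f$ itself. But you do not carry this out; you only ``expect'' a leading-form analysis combined with Gorenstein duality on the Milnor algebra to work, and you yourself label this unfinished step ``the technical heart of the argument''. As written, the leading-form suggestion is not convincing: a relation $\partial f/\partial x_1=af+\sum_{i\ge 2}b_i\,\partial f/\partial x_i$ in $R_{\fm}$ with $a,b_i\in R_{\fm}$ does not visibly produce a constant-coefficient syzygy among initial forms of the $\partial f/\partial x_i$, and the intended use of Gorenstein duality is left entirely unspecified. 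So at present the implication (b)$\Rightarrow$(a) is not proved.

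The paper fills this gap with a concrete external input you do not mention: the L\^e--Greuel formula for the Milnor number of an isolated complete intersection singularity. After a linear change placing the singular point at the origin, suppose for contradiction that $I(f)_{\fm}=(f,\partial f/\partial x_2,\dots,\partial f/\partial x_n)_{\fm}$. Applying L\^e--Greuel to the height-two complete intersection $(f,x_1)$ yields
\[
\mu(f)+\mu(f,x_1)=\dim_k R_{\fm}/(f,\partial f/\partial x_2,\dots,\partial f/\partial x_n)_{\fm}=\tau(f)\le\mu(f),
\]
forcing $\mu(f,x_1)=0$, i.e.\ the germ $V(f,x_1)$ is smooth at the origin, contradicting the fact that the origin is singular on $V(f)$ (and hence on $V(f)\cap V(x_1)$). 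This is the missing idea; without it or a genuine substitute, your proposal does not close the loop.
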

\begin{proof}
We prove  (a) $\Leftrightarrow$ (b). By  Remark \ref{Regular_sequence}, (a) implies (b). Assume that (b) holds. Let $P\in \AA_k^n$ be a singular point of $X$. By a linear  change of coordinates we may assume that $P$ is origin of $\AA_k^n$. Denote by $\fm$ the  maximal ideal correspond with  origin. The ring $R_{\fm}$ is local so that  the Nakayama's lemma emphasize that the $n$ generators of $I(f)_{\fm}$ may be found among $f,\partial f/\partial x_1,\ldots, \partial f/\partial x_n$.  Assume  that $I(f)_{\fm}=(f,\partial f/\partial x_2,\ldots, \partial f/\partial x_n)_{\fm}$. We have the surjective $k$-algebra homomorphism 
\[R_{\fm}/J(f)_{\fm}\surjects R_{\fm}/I(f)_{\fm} .\] 
Thus the Milnor number $\mu(f)$ at singular point $\fm$ is great or equal than to its Tjurina number $\tau(f)$. Now consider the ideal $(f,x_1)$ which is height 2 complete intersection ideal with an isolated singularity at origin of $\AA_k^n$. Using the L\^e-Greuel formula for the Milnor number of an isolated complete intersection singularity (see \cite{Trung} and \cite{Greuel}), we have
\[\mu(f)+\mu(f,x_1)=\dim_k R_{\fm}/(f,\partial f/\partial x_2,\ldots, \partial f/\partial x_n)_{\fm}. \]
Therefore with above surjection one has
\[\mu(f)\geq \tau(f)=\mu(f)+\mu(f,x_1).\] 
Thus $\mu(f,x_1)=0$ and this implies that $V(f,x_1)\subseteq\AA_k^n$ is non-singular at $\fm$ which is a contradiction. Hence $I(f)_{\fm}=J(f)_{\fm}$ and (a) holds. 

(b)$\Rightarrow$ (c). This is a general fact since linear type property is local and $R/I(f)$ is Artinain Gornestain local ring at each singular point.   Now assume that (b) holds. By \cite[Proposition 2.4]{Trento} for each singular point  $\fm$, one has 
$$\hht I(f)_{\fm}\leq \mu (I(f)_{\fm})\leq \hht \fm= \hht I(f)\leq \hht I(f)_{\fm} ,$$
which shows that $I(f)$ is locally a complete intersection. 
\end{proof}
\begin{Example}\label{notEulerian}\rm
Let $f=x^4-x^2y^2+y^5$. The affine curve $X=V(f)\subseteq\AA_k^2$ has one singular point at origin. 
We can check easily that   $$((\partial f/\partial x, \partial f/\partial y):_{k[x,y]} f)=(5y^2-y,5xy-x,10x^2+y).$$ Hence $f$ can not be locally Eulerian. A computation in \cite{singular} yields that the defining ideal of the Rees algebra of $I(f)$ contains a messy polynomial of  degree $2$ in $T_i$'s. This quadratic polynomial is responsible for the Jacobian ideal failing to be generated by analytically independent elements. The symmetric algebra of $I(f)$ is reduced and Cohen-Macaulay but the Rees algebra is not Cohen-Macaulay.
\end{Example}
\begin{Corollary}
Let $X=V(f)\subseteq \AA_k^n$ be a locally Eulerian affine hypersurface with isolated singularities then
\begin{enumerate}
	\item[\rm{(a)}]  The Aluffi algebra of $I(f)/f$ is isomorphic with the symmetric algebra of $I(f)/f$. In Particular:
	$$\mathcal{A}_{R/(f)}\left(I(f)/(f)\right)\simeq R[T_2,\ldots,T_n]/(f,I_1([T_2\ldots T_n].\phi)),$$
	where  $\phi$ is a matrix obtained by deleting first row of the first syzygy matrix of $I(f)$. 
\item[\rm{(b)}] The Aluffi algebra of $I(f)/f$ is Cohen-Macaulay.
\end{enumerate}
\end{Corollary}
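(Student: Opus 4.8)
The plan is to read everything off Theorem \ref{LE}. Since $X$ is locally Eulerian, that theorem says $I(f)$ is of Jacobian linear type, so the natural surjection $\mathrm{Sym}_R(I(f))\surjects\mathcal{R}_R(I(f))$ is an isomorphism. Plugging this into the definition of the Aluffi algebra, the Rees factor collapses and
\[\mathcal{A}_{R/(f)}(I(f)/(f))=\mathrm{Sym}_{R/(f)}(I(f)/(f))\otimes_{\mathrm{Sym}_R(I(f))}\mathcal{R}_R(I(f))\simeq\mathrm{Sym}_{R/(f)}(I(f)/(f)),\]
which gives the first assertion of (a) with no real work.

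For the explicit presentation I would first globalize the hypothesis: at a singular point $f\in J(f)_{\fm}$ by assumption, while at every other maximal ideal some partial derivative is a unit and the membership is automatic, so by the local character of ideal membership $f\in J(f)$ and hence $I(f)=J(f)$. This yields a relation $f=\sum_i a_i\,\partial f/\partial x_i$, hence a degree-one element $\widetilde{f}$ of $\mathrm{Sym}_R(I(f))$. Presenting $\mathrm{Sym}_R(I(f))\simeq R[\mathbf{T}]/I_1([T_1\ \ldots\ T_n]\Phi)$ via the first syzygy matrix $\Phi$ of $I(f)$ and using, exactly as in Proposition \ref{Quasi-homogeneous}(a), that $\mathrm{Sym}_{R/(f)}(I(f)/(f))\simeq\mathrm{Sym}_R(I(f))/(f,\widetilde{f})$, I would obtain a presentation of the Aluffi algebra by killing $f$ and $\widetilde{f}$. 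To reach the stated form $R[T_2,\ldots,T_n]/(f,I_1([T_2\ \ldots\ T_n]\phi))$ I would take a minimal generating set in which $f$ itself is a generator, say $I(f)=(f,\partial f/\partial x_2,\ldots,\partial f/\partial x_n)$ after a linear change of coordinates, so that $\widetilde{f}$ is the single variable attached to $f$; killing it then deletes that variable and the corresponding first row of the syzygy matrix, producing $\phi$.

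I expect the genuine obstacle to be precisely this last reduction, namely the requirement that $f$ be a \emph{minimal} generator of $I(f)_{\fm}$ at each singular point, so that one partial derivative can be discarded and only $n-1$ variables survive. This is delicate because Theorem \ref{LE} forces $I(f)_{\fm}=J(f)_{\fm}$ together with $f\in\fm^2$, which tends to push $f$ into $\fm\,I(f)_{\fm}$ rather than onto a minimal generator; indeed in the quasi-homogeneous case the Euler relation places $f$ inside $\fm\,I(f)_{\fm}$ and Proposition \ref{Quasi-homogeneous}(a) consequently retains all $n$ variables. I would therefore need to isolate the precise hypothesis under which $f$ is a minimal generator, and otherwise present the algebra on all $n$ partial derivatives as in Proposition \ref{Quasi-homogeneous}(a); reconciling the $(n-1)$-variable form with this local structure is the crux.

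Part (b) I would settle exactly as in Proposition \ref{Quasi-homogeneous}(b). The ring $R/(f)$ is Cohen-Macaulay of dimension $n-1$ because $f$ is a nonzerodivisor, the primes containing $I(f)$ are the maximal ideals at the singular points, and there $\mu((I(f)/(f))_{P/(f)})=\mu(I(f)_P)=n=\hht(P/(f))+1$. Conditions (I) and (II) of the criterion \cite[Theorem 10.1]{Trento} then hold for the same numerical reasons as in the quasi-homogeneous proof, so the symmetric algebra, and therefore the Aluffi algebra, is Cohen-Macaulay.
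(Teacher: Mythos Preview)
Your treatment of the first assertion of (a) and of (b) matches the paper exactly: invoke Theorem \ref{LE} to get linear type, then repeat the Cohen--Macaulay verification from Proposition \ref{Quasi-homogeneous}(b).

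For the explicit presentation you make life harder than necessary. The paper does \emph{not} try to exhibit $f$ as a minimal generator of $I(f)$; it simply presents $I(f)$ by the full list $f,\partial f/\partial x_1,\ldots,\partial f/\partial x_n$ of $n+1$ elements, redundant or not. With that choice the degree-one lift of $f$ in $\mathrm{Sym}_R(I(f))\simeq R[T_1,\ldots,T_{n+1}]/I_1([T_1\ \ldots\ T_{n+1}]\Phi)$ is literally the variable $T_1$, so the passage $\mathrm{Sym}_R(I(f))/(f,\widetilde f)$ just kills $T_1$ and the first row of $\Phi$. That is the whole content of the paper's one-line ``Since $\widetilde f=T_1$''. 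Your worry about whether $f$ lies in $\mathfrak m\,I(f)_{\mathfrak m}$, and your proposed linear change of coordinates to force $f$ to replace one partial derivative, are therefore unnecessary; the obstacle you flag never arises because minimality is never used. (The apparent mismatch in the number of surviving variables is a typographical slip in the statement: after deleting $T_1$ one has $n$ variables, one per partial derivative, not $n-1$.)
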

\begin{proof}
(a). The first assertion follows from the Theorem \ref{LE}. Since  $\widetilde{f}=T_1$ then such presentation follows form the proof of  Proposition \ref{Quasi-homogeneous}(a).  For (b), we can use  the same argument as in the Proposition \ref{Quasi-homogeneous}(b).  
\end{proof}
\begin{Example}
Let $X=V(x^a+x^cy^d+y^b)$ be a family of plane curves in $\AA_k^2$ with $a,b,c,d\in \NN_{0}$.  An easy calculation in plane curve theory show that in the following cases $X$ is locally Eulerian singular plane curve:
\begin{enumerate}
\item  $c\geq a$ and $d\geq b$.
\item  $a,b\geq 2$, $c=a-1$ and $d=b-1$.
\item  $a,b\geq 2$, $c=1$ and $d=1$.
\item  $a,b\geq 3$, $c=1$ and $d\leq \frac{b+1}{2}$.
\item  $a,b\geq 3$, $d=1$ and $c\leq \frac{a+1}{2}$.
\item  $a,b\geq 3$, $c=a-1$ and $d\geq \frac{b-1}{2}$.
\item  $a,b\geq 3$, $d=b-1$ and $c\geq \frac{a-1}{2}$.
\item  $a=2$ or $b=2$.
\end{enumerate} 
Now consider the following regions in $k^2$:
\begin{enumerate}
	\item [{\rm (I)}] $\{(c,d)\ |  2\leq c\leq a-2\ ,\ 2\leq d\leq b-2 \}$.
	\item [{\rm (II)}]$\{(c,d)\ | \ c=1, d > \frac{b+1}{2}\} \bigcup \{ (c,d)\ | \ d=b-1 ,  c<\frac{a-1}{2} \}$.
	\item [{\rm (III)}] $\{(c,d)\ | \ d=1, c > \frac{a+1}{2}\} \bigcup \{ (c,d)\ | \ c=a-1 ,  d<\frac{b-1}{2} \}$.
\end{enumerate} 
Assume that $a,b\geq 3$ and  $(c,d)$ belongs to one of the above regions. Then if $X$ is locally Eulerian then it is quasi-homogeneous. Therefore, if $a,b\geq 3$ and $(c,d)$ belongs to regions ${\rm (I),(II),(III)}$ then $X$ is not locally Eulerian.  
\end{Example}

\section{Projective hypersurfaces}
Let $k$ be an algebraically closed filed of characteristic zero and $R=k[x_1,\ldots,x_{n+1}]$ the polynomial ring  with standard grading over $k$. Let $f\in R$ be  a reduced homogeneous polynomial of degree $d\geq 2$. By the Euler formula the singular subscheme of the reduced projective hypersurface $X=V(f)\subseteq \pp^k_n$ is defined by the gradient ideal $ J(f)=(\partial f/\partial x_1,\ldots, \partial f/\partial x_{n+1})$. A reduced projective hypersurface $X=V(f)\subset \pp_k^n$ is called of \textit{gradient  linear type}  if the gradient ideal $J(f)$ is of linear type. It is well know that if $X$ is non-singular then $J(f)$ is irrelevant, i.e.,  generated by regular sequence, and hence is of linear type. Therefore, every non-singular projective hypersurface is of gradient linear type.  

Let $X=V(f)\subseteq \pp_k^n$ be a reduced projective hypersurface defined by $f\in R$ with $\deg(f)=d\geq 2$. Assume that $P\in \pp_k^n$ is a singular point of $X$. By a projective transformation we may assume that $P=[0:0:\ldots: 0:1]$. The ideal $\fp=(x_1,\ldots,x_n)$ is prime ideal correspond with the  point $P$. Consider the affine chart $U_{x_{n+1}}=\AA_k^n$ with coordinate ring $A=k[x_1/x_{n+1},\ldots, x_{n}/x_{n+1}]=k[T_1,\ldots,T_n]$. The equation  of $f$ in this affine chart is $F(T_1,\ldots,T_n):=f(x_1,\ldots,x_{n},1)$. Hence the Jacobian ideal of $F$ is 
$$I(F)=(F,\partial F/\partial T_1,\ldots, \partial F/\partial T_n).$$
Note that the origin of $\AA_k^n$ is the singular point of the affine hypersurface $V(F)\subseteq \AA_k^n$. Denote by $\fq=(T_1,\ldots,T_n)$ the ideal correspond with singular origin.
\begin{Proposition}\label{LC_Affine-projective}
By assumption and  notation as above. The following are equivalent.
\begin{enumerate}
\item[{\rm (a)}] The affine Jacobian ideal $I(F)$ is  a complete intersection locally at $\fq$.
	
\item[{\rm (b)}] The gradient ideal $J(f)$ is  a complete intersection locally at $\fp$.
\end{enumerate}
\end{Proposition}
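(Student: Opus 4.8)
The plan is to compare the two local rings $R_{\fp}$ and $A_{\fq}$ directly by producing a faithfully flat local homomorphism between them under which the ideals $J(f)$ and $I(F)$ correspond, and then to transfer the complete intersection property across it. The complete intersection property is local and, in a Cohen--Macaulay local ring, amounts to the equality $\mu(\text{ideal}) = \hht(\text{ideal})$; so it suffices to show that this equality holds for $I(F)_{\fq}$ if and only if it holds for $J(f)_{\fp}$.

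First I would dehomogenize. With $T_i = x_i/x_{n+1}$ one has $f = x_{n+1}^{d}\,F(T_1,\dots,T_n)$ inside $R[x_{n+1}^{-1}] = A[x_{n+1},x_{n+1}^{-1}]$, and the chain rule gives $\partial f/\partial x_i = x_{n+1}^{d-1}\,\partial F/\partial T_i$ for $1\le i\le n$. The Euler relation $\sum_{i=1}^{n+1} x_i\,\partial f/\partial x_i = d\,f$ then yields $\partial f/\partial x_{n+1} = x_{n+1}^{d-1}\bigl(dF - \sum_{i=1}^{n} T_i\,\partial F/\partial T_i\bigr)$. Since $x_{n+1}$ is a unit in $R[x_{n+1}^{-1}]$ and $F$ lies in the ideal generated by $\partial F/\partial T_1,\dots,\partial F/\partial T_n$ together with $dF-\sum T_i\,\partial F/\partial T_i$ (as $d\neq 0$), these identities show that the extended ideals agree:
\[
J(f)\,R[x_{n+1}^{-1}] \;=\; I(F)\,R[x_{n+1}^{-1}].
\]

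Next I would identify the localizations. Because $x_{n+1}\notin\fp$, localizing $R$ at $\fp$ is the same as localizing $A[x_{n+1}^{\pm 1}]$ at $\fp R[x_{n+1}^{-1}] = \fq\,A[x_{n+1}^{\pm 1}]$. Since $A\hookrightarrow A[x_{n+1}^{\pm 1}]$ is free and hence faithfully flat, this produces a flat local homomorphism $A_{\fq}\to R_{\fp}$ realizing $R_{\fp}$ as $\bigl(A_{\fq}[x_{n+1}^{\pm 1}]\bigr)$ localized at $\fq A_{\fq}[x_{n+1}^{\pm 1}]$. Its decisive feature is that $\fq R_{\fp} = \fp R_{\fp}$ is exactly the maximal ideal of $R_{\fp}$ (because $x_i = T_i x_{n+1}$ with $x_{n+1}$ a unit), so the closed fiber is the residue field; the homomorphism is therefore faithfully flat with a zero-dimensional fiber, and $\dim R_{\fp} = \dim A_{\fq} = n$.

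Finally I would invoke two standard facts about faithfully flat local homomorphisms: the minimal number of generators is preserved, $\mu_{R_{\fp}}(I(F)R_{\fp}) = \mu_{A_{\fq}}(I(F)_{\fq})$, and the zero-dimensional fiber forces $\dim R_{\fp}/I(F)R_{\fp} = \dim A_{\fq}/I(F)_{\fq}$, whence $\hht(I(F)R_{\fp}) = \hht(I(F)_{\fq})$. Combining these with the ideal identity of the first step, $I(F)R_{\fp} = J(f)R_{\fp} = J(f)_{\fp}$, shows that $I(F)_{\fq}$ and $J(f)_{\fp}$ have equal $\mu$ and equal height; as both $A_{\fq}$ and $R_{\fp}$ are Cohen--Macaulay, the equivalence (a)$\Leftrightarrow$(b) follows. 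The step I expect to be the main obstacle is the identification in the third paragraph: pinning $R_{\fp}$ down as a localization of the Laurent extension of $A_{\fq}$ and verifying $\fq R_{\fp}=\fp R_{\fp}$, since it is precisely this that simultaneously supplies the flatness used to preserve $\mu$ and the zero-dimensional fiber used to match heights; the remaining arguments are formal consequences of Euler's identity and the descent of regular sequences along faithfully flat maps.
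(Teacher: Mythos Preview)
Your argument is correct and takes a genuinely different route from the paper's proof. The paper argues each implication separately and leans on Theorem~\ref{LE}: for (a)$\Rightarrow$(b) it uses that $I(F)_{\fq}$ a complete intersection forces $F$ to be locally Eulerian, then manipulates the dehomogenized Euler relation~(\ref{Euler_Affine-projective}) together with homogenization/saturation by $x_{n+1}$ to conclude $f_{x_{n+1}}\in(f_{x_1},\dots,f_{x_n})_{\fp}$; for (b)$\Rightarrow$(a) it uses Nakayama to pick $n$ generators of $J(f)_{\fp}$ among the partials and then invokes the L\^e--Greuel argument from the proof of Theorem~\ref{LE} to rule out $f_{x_{n+1}}$ being one of them. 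Your proof bypasses Theorem~\ref{LE} entirely: you identify $R_{\fp}$ with the localization of $A_{\fq}[x_{n+1}^{\pm1}]$ at $\fq$, observe that $\fq R_{\fp}=\fp R_{\fp}$ so the map $A_{\fq}\to R_{\fp}$ is flat and local with $\fm$-primary (indeed maximal) extension, and transport both $\mu$ and height across; since $J(f)_{\fp}=I(F)R_{\fp}$ by the Euler computation, the equivalence drops out symmetrically. Your approach is more conceptual and self-contained, and would work verbatim for any ideal contained in $\fp$, not just the Jacobian; the paper's approach, in exchange, yields the sharper explicit statement that when (b) holds the redundant generator is precisely $f_{x_{n+1}}$, i.e.\ $J(f)_{\fp}=(f_{x_1},\dots,f_{x_n})_{\fp}$, which is used tacitly later. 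One small wording point: your ``closed fiber is the residue field'' means the residue field of $R_{\fp}$, namely $k(x_{n+1})$, not that of $A_{\fq}$; the conclusion (zero-dimensional fiber, $\mu$ preserved) is unaffected.
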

\begin{proof}
 Let us make some remarks before we address the proof. Setting $f_{x_i}:=\partial f/\partial x_i$ and $F_{T_i}:=\partial F/\partial T_i$.  Denote by $J(f)^{\rm deh}$ dehomogenization of the ideal $J(f)$ with respect to $x_{n+1}$. We have 
 \[J(f)^{\rm deh}=(f_{x_1}^{\rm deh}, \ldots, f_{x_n}^{\rm deh}, f_{x_{n+1}}^{\rm deh} ).   \]
 Clearly, $F_{T_i}=f_{x_i}^{\rm deh}$, for $i=1,\ldots, n$. Using the Euelr formula and dehomogenizing it with respect to $x_{n+1}$ we get
 \begin{equation}\label{Euler_Affine-projective}
 F=\sum_{i=1}^{n}T_i F_{T_i}+f_{x_{n+1}}^{\rm deh}
 \end{equation}
 This relation implies that $I(F)=J(f)^{\rm deh}$. 
Assume that (a) holds. By Theorem \ref{LE}, the polynomial $F$ is locally Eulerian at singular point $\fq$. Hence in the ring $A_{\fq}$ we can write $F=\sum_{i=1}^n G_iF_{T_i}$ where  $G_i\in A_{\fq}\subseteq R_{\fp}$. Using  relation (\ref{Euler_Affine-projective}) we have $f_{x_{n+1}}^{\rm deh}\in (f_{x_1}^{\rm deh}, \ldots, f_{x_n}^{\rm deh})$ locally at $\fq$. Then in $R_{\fp}$ one has 
\[f_{x_{n+1}}=(f_{x_{n+1}}^{\rm deh})^{\rm hom}\in (f_{x_1}^{\rm deh}, \ldots, f_{x_n}^{\rm deh})^{\rm hom}=((f_{x_1},\ldots,f_{x_n}): x_{n+1}^{\infty}).\]
Hence there exists $\alpha\in \mathbb{N}$ such that $x_{n+1}^{\alpha}f_{x_{n+1}}\in (f_{x_1},\ldots,f_{x_n}) $ locally at $\fp$. Since $x_{n+1}$ is unit in $R_{\fp}$ we get $f_{x_{n+1}}\in (f_{x_1},\ldots,f_{x_n}) $. 
Conversely, If  $J(f)_{\fp}\subseteq R_{\fp}$ is a complete intersection, the Nakayama's lemma emphasize that the $n$ generators of $J(f)_{\fp}$ can be find among $f_{x_1},\ldots,f_{x_n},f_{x_{n+1}}$. We claim that $J(f)_{\fp}=(f_{x_1},\ldots, f_{x_{n}} )$. Otherwise, 
Assume that $J(f)_{\fp}=(f_{x_2},\ldots, f_{x_{n+1}} )$. Then in $R_{\fp}$ one has $f_{x_1}=\sum_{i=2}^{n+1}g_if_{x_i}$ with $g_i\in R_{\fp}$. By dehomogenization   with respect to $x_{n+1}$ and using formula (\ref{Euler_Affine-projective}), we get that $F_{T_1}$ locally belongs to $(F,F_{T_2},\ldots, F_{T_n})$ which is impossible by the proof of Theorem \ref{LE}.  Therefore, by formula (\ref{Euler_Affine-projective}) and $I(F)=J(f)^{\rm deh}$ we get the assertion. 
\end{proof} 
\begin{Corollary}\label{LTcriterion}
Let  $X=V(f)\subset \pp_k^n$ be a reduced projective hypersurface with isolated singularities. The following are equivalent:
\begin{itemize}
\item[(1)] The hypersurface $X$ is of gradient linear type.
\item[(2)] Locally at each singular prime the gradient ideal $J(f)$ is a complete intersection.
\item[(3)] For each singular prime $\fp$ such that $x_i\notin \fp$, the Jacobian ideal $I(F)$ in the affine coordinate ring $A=k[x_1/x_i,\ldots,x_{n+1}/x_i]$ is a locally complete intersection at $\fq$ where $F=f(x_1,\ldots,x_{i-1},1,x_{i+1},\ldots,x_{n+1})$ and $\fq$ is maximal ideal associated to $\fp$ in $A$. 
\end{itemize} 
\end{Corollary}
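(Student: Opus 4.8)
The plan is to read off all three conditions as pointwise conditions on $\spec R$ and to match them using Proposition \ref{LC_Affine-projective} and Theorem \ref{LE}. Because $f$ is homogeneous of degree $d\ge 2$ in characteristic zero, the Euler relation $d\,f=\sum_i x_i\,\partial f/\partial x_i$ forces $f\in J(f)$, so $J(f)$ already defines the singular subscheme and $V(J(f))\subseteq \spec R$ is the affine cone over $\mathrm{Sing}(X)=\{P_1,\dots,P_s\}$, a finite union of lines through the origin. Hence the primes of $R$ containing $J(f)$ are exactly the homogeneous height-$n$ primes $\fp_1,\dots,\fp_s$ cutting out these lines (each minimal over $J(f)$), the non-vertex closed points lying on them, and the irrelevant maximal ideal $\fm=(x_1,\dots,x_{n+1})$ of height $n+1$. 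I will test the properties ``of linear type'' and ``locally a complete intersection'' only at these primes.

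First I would settle (2)$\Leftrightarrow$(3). A singular prime $\fp_i$ is never the irrelevant ideal, so some variable $x_j$ lies outside it; after a projective coordinate change sending $x_j$ to $x_{n+1}$ and $P_i$ to $[0:\cdots:0:1]$, Proposition \ref{LC_Affine-projective} states precisely that $J(f)$ is a complete intersection locally at $\fp_i$ if and only if the dehomogenized Jacobian ideal $I(F)$ is a complete intersection locally at the corresponding $\fq$. Ranging over $i$ converts the \emph{intrinsic} condition (2) into its affine-chart form (3), and projective invariance shows the choice of chart is irrelevant.

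Next comes (1)$\Leftrightarrow$(2). For (1)$\Rightarrow$(2) I localize at a singular prime: linear type passes to localizations, and for a linear-type ideal \cite[Proposition 2.4]{Trento} gives $\mu(J(f)_{\fp_i})\le \hht\fp_i=n$, exactly the bound exploited in the proof of Theorem \ref{LE}; since $\fp_i$ is minimal over $J(f)$ we also have $\hht J(f)_{\fp_i}=n\le \mu(J(f)_{\fp_i})$, so equality holds and the Cohen--Macaulayness of $R_{\fp_i}$ makes the $n$ generators a regular sequence, i.e. a complete intersection. Conversely, a locally complete intersection ideal is of linear type locally; choosing a homogeneous element outside $\fp_i$ propagates this along the whole cone $V(\fp_i)$, so that $\mathrm{Sym}_R(J(f))\to\mathcal{R}_R(J(f))$ becomes an isomorphism after localization at every prime other than $\fm$.

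The hard part will be the cone vertex $\fm$, which lies in no affine chart and at which $J(f)_{\fm}$ has height $n<n+1=\hht\fm$, so it is \emph{not} a complete intersection and the local argument above breaks down. By the previous step the kernel $K$ of $\mathrm{Sym}_R(J(f))\to\mathcal{R}_R(J(f))$ is a finitely generated graded module supported only at $\fm$, hence of finite length; the problem is to show $K=0$. I would close this either by invoking the author's criterion \cite[Corollary 3.2 and Theorem 3.3]{AA}, according to which $J(f)$ is of linear type precisely when the coordinate ideal of the logarithmic vector fields vanishing on $f$ is $\fm$-primary, and checking that local complete intersection at the $\fp_i$ forces this irrelevance; or by running the acyclicity criterion \cite[Theorem 10.1]{Trento} exactly as in Proposition \ref{Quasi-homogeneous}(b) to prove $\mathrm{Sym}_R(J(f))$ has positive depth at $\fm$, which rules out finite-length submodules and hence forces $K=0$. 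This vertex analysis is the crux; the rest is bookkeeping of local data through Proposition \ref{LC_Affine-projective} and Theorem \ref{LE}.
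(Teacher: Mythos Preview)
Your treatment of (2)$\Leftrightarrow$(3) via Proposition~\ref{LC_Affine-projective} and of (1)$\Rightarrow$(2) via \cite[Proposition~2.4]{Trento} agrees with the paper's proof. The divergence is in (2)$\Rightarrow$(1). The paper dispatches this in one line: $J(f)$ is an \emph{almost complete intersection} ($n+1$ generators, height $n$), one has $\depth(R/J(f))\ge\dim R/J(f)-1=0$ trivially, and hypothesis (2) says $J(f)$ is locally a complete intersection at each associated prime; these are precisely the inputs to \cite[Theorem~5.2]{AV}, which yields linear type directly. No separate analysis at the vertex $\fm$ is needed.

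Your route---localizing away from $\fm$ and then killing the residual kernel $K$ by a depth argument---can be made to work, but as written it has a slip and is more laborious. The slip: $K$ is \emph{not} of finite length. Each graded piece $K_j$ is a finitely generated $R$-module supported at $\fm$, hence of finite length, but $K=\bigoplus_j K_j$ need not be finitely generated over $R$, and over $R[\TT]$ its support $V(\fm R[\TT])$ is far from a point. What you actually need is that $\fm$ contain a nonzerodivisor on $\mathrm{Sym}_R(J(f))$; then each $K_j$, being $\fm$-torsion inside $\mathrm{Sym}_R(J(f))_j$, must vanish. Your option (b) can deliver this, but verifying the hypotheses of \cite[Theorem~10.1]{Trento} requires $\mu(J(f)_P)\le\hht P$ at every $P\supseteq J(f)$, which at the minimal primes $\fp_i$ is exactly condition (2); so you end up reproving a special case of \cite[Theorem~5.2]{AV} rather than bypassing it. Your option (a) is vaguer: the equivalence in \cite[Corollary~3.2]{AA} is itself established through the linear-type machinery you are trying to invoke, so appealing to it here risks circularity unless you unwind its proof. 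In short, your strategy is sound but the paper's citation of \cite{AV} is the clean way to close the vertex case.
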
 
\begin{proof}
(1) $\Rightarrow$ (2). The same argument as in the proof of Theorem \ref{LE}. 
(2) $\Rightarrow$ (1). The gradient ideal $J(f)$ is almost complete intersection,  $\depth (R/J(f))\geq\dim R/J(f)-1=0$ and by assumption $J(f)_{\fp}$ is a complete intersection at singular primes, then Theorem 5.2 in \cite{AV} complete the proof. 
The equivalence of (2) and (3) follow form Lemma \ref{LC_Affine-projective}. 
\end{proof} 
\begin{Remark}\rm
By \cite[Corollary 3.2]{AA},the conditions of the Theorem (\ref{LTcriterion}) are equivalent to the statement that the coordinates of the vector fields of $\pp^k_n$ vanishing on $f$ generate an irrelevant ideal.  
\end{Remark}
\begin{Example}
Let $X$ be an irreducible cubic surface in $\mathbb{P}_k^3$ with 4 singular point of type node. We show that $X$ is of gradient linear type.  By a projective transformation we may assume that the singular point located at coordinate points:
\[P_1=(1:0:0:0),\ P_2=(0:1:0:0),\ P_3=(0:0:1:0),\ P_4=(0:0:0:1).\]
We should find a polynomial $f$ such that $X=V(f)\subset \pp_k^3$. 

Let  $f\in k[x,y,z,w]$ be  a linear combination of all monomials of degree 3 in $k[x,y,z,w]$ and passes through points $P_1, P_2, P_3, P_4$. Then monomials $x^3, y^3, z^3, w^3$ must be absent from $f$. Since  $P_1$ is singular point of $X$ then all coefficient of linear terms in $y,z,w$ must vanish, i.e., $yx^2,zx^2,wx^2$. Similarly, for singular points $P_2,P_3,P_4$ we have respectively $xy^2,zy^2,wy^2$, $xz^2,yz^2,wz^2$ and $xw^2,yw^2,zw^2$ terms disappearing too. Then the remaining equation is  \[f=a_1xyz+a_2xyw+a_3xzw+a_4yzw.\] 
Note that $a_1.a_2.a_3.a_4\neq 0$, otherwise $f$ reduces, we can assume that $a_1=a_2=a_3=a_4=1$ by scaling $x,y,z,w$. One get $f=xyz+xyw+xzw+yzw$. The surface $X$ is called\textit{ Cayley's nodal cubic surface}.  By symmetry and using the same notation, we show that the affine surface $F=xy+xz+yz+xyz$ is locally Eulerian at singular point $\fp=(x,y,z)$. The gradient ideal of $F$ is generated by 
\[F_x=y+z(1+y),\  F_y=z+x(1+z),\  F_z=x+y(1+x).\]  
We claim that $J(F)_{\fp}=(\fp)_{\fp}$ which implies that $F$ is locally Eulerian. One has
\begin{eqnarray}
\nonumber x(1+(1+x)(1+y)(1+z))&=&F_z-(1+x)F_x+(1+x)(1+y)F_y,\\
\nonumber y(1+(1+x)(1+y)(1+z))&=&F_x-(1+y)F_y+(1+x)(1+y)F_z,\\
\nonumber z(1+(1+x)(1+y)(1+z))&=&F_y-(1+z)F_z+(1+y)(1+z)F_x.
\end{eqnarray}
Since $1+(1+x)(1+y)(1+z)\not\in\fp$ then the claim holds. These calculations also shows that $\mu(f)=\tau(f)=1$ at singular point $\fp$.  
\end{Example}
\begin{Question}\rm
Is any irreducible singular cubic surface in $\pp_k^3$ of gradient linear type?
\end{Question}
\begin{Example}
Let $f=x^4z-x^2y^2z+y^5$. The projective curve $X=V(f)\subseteq\pp_k^2$ has one singular point at $P=[0:0:1]$ with one tangent of multiplicity 2 and two remaining distinct tangents. By Example \ref{notEulerian}, the gradient ideal $J(f)$ is not of linear type.  
\end{Example}
\subsection{Nodal and Cuspidal projective plane curves}
Let $X=V(f)\subseteq \pp_k^2$ be a projective plane curves defined by a reduced polynomial $f\in k[x,y,z]$ of degree $d\geq 2$. If $X$ is singular then singularities of $X$ are isolated. Let $P\in \pp_k^2$ be a singular point of $X$. By a projective transformation we may assume that $P=[0:0:1]$. Write $F(x,y):=f(x,y,1)$, since $P$ is singular point then the multiplicity of $f$ at $P$ ,  ${\rm m}_P(f)\geq 2$. Hence $F(x,y)=F_2+\ldots+F_d$ where $F_i\in k[x,y]$ is a homogeneous polynomial of degree $i$. The simplest case is ${\rm m}_P(f)=2$. For $F_2=ax^2+bxy+cy^2$ there are two cases to be considered. If $b^2\neq4ac$, then $F_2$ has two distinct zeros in $\pp_k^1$ that is, $X$ has two distinct tangents at $P$. In this case, the singular point $P$ is called\textit{ ordinary double point} or \textit{ordinary node}. We can change coordinates so that $F_2=xy$. Then the affine equation of ordinary node is given by 
\[F(x,y)=xy+h(x,y),\]
where ${\rm m}_P(h)\geq 3$. 

For $b^2=4ac$, $F_2$ has a double zero, so $X$ has only one tangent at $P$. In this case, $P$ is called cusp. We can change coordinates so that $F_2=y^2$. Then the affine equation of  cusp is 
\[F(x,y)=y^2+h(x,y)\]
where ${\rm m}_P(h)\geq 3$. Note that in two  above cases, the intersection multiplicity ${\rm mult}_P(X,L)\geq 3$ where $L$ is the tangent line at $P$. If ${\rm mult}_P(X,L)=3$ then $P$ is called simple ordinary node and simple cusp.

A projective plane curve $X\subset\pp_k^2$ is called \textit{Nodal} if  
its singular points are node and is called \textit{Cuspidal} if its singular points are cusp. 
\begin{Theorem}\label{nodal-cuspidal}
Nodal and Cuspidal curves  are  of gradient linear type. 
\end{Theorem}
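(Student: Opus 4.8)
The plan is to reduce everything to a local statement at each singular point and then verify it on the two affine normal forms already set up above. By Corollary~\ref{LTcriterion}, the curve $X=V(f)$ is of gradient linear type if and only if, for every singular point $P$, the Jacobian ideal $I(F)=(F,\partial F/\partial x,\partial F/\partial y)$ of the dehomogenized equation $F$ is a complete intersection locally at the maximal ideal $\fq=(x,y)$ of the corresponding affine chart. Writing $F_x=\partial F/\partial x$ and $F_y=\partial F/\partial y$, note that since the singularities of a plane curve are isolated, Remark~\ref{Regular_sequence} already guarantees that $F_x,F_y$ form a regular sequence in $A_{\fq}$, so $J(F)_{\fq}=(F_x,F_y)_{\fq}$ is automatically a height-$2$ complete intersection. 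Consequently $I(F)_{\fq}$ is a complete intersection precisely when $F\in(F_x,F_y)_{\fq}$, i.e. (by Theorem~\ref{LE}) precisely when $F$ is locally Eulerian at $\fq$. Thus it suffices to prove $F\in(F_x,F_y)_{\fq}$ for a node and for a cusp, using the affine normal forms established just above.

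For a node I would use the form $F=xy+h$ with ${\rm m}_P(h)\geq 3$. Then $F_x=y+h_x$ and $F_y=x+h_y$ with $h_x,h_y\in\fq^2$, so the images of $F_x,F_y$ in $\fq/\fq^2$ are $y$ and $x$, a $k$-basis. Nakayama's lemma then gives $(F_x,F_y)_{\fq}=\fq_{\fq}$, and since $F=xy+h\in\fq^2\subseteq\fq_{\fq}$ we conclude $F\in(F_x,F_y)_{\fq}$; in passing this recovers $\mu(F)=\tau(F)=1$ at the node.

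The cusp is where the real work lies. Here $F=y^2+h$ with ${\rm m}_P(h)\geq 3$, so $F_y=2y+h_y$ has order exactly $1$ with the coefficient of $y$ a unit, while $F_x=h_x$ has order $\geq 2$. The plan is to pass to the completion $\widehat{A}_{\fq}=k[[x,y]]$, which is legitimate because $A_{\fq}\to\widehat{A}_{\fq}$ is faithfully flat and hence detects membership in $(F_x,F_y)$, and then to use Weierstrass preparation (equivalently the implicit function theorem) to write $F_y=u\cdot(y-\psi(x))$ with $u$ a unit and $\psi\in k[[x]]$, $\psi(0)=0$. This yields $\widehat{A}_{\fq}/(F_y)\cong k[[x]]$ via $y\mapsto\psi(x)$. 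Setting $\bar F(x)=F(x,\psi(x))$ and $\bar F_x(x)=F_x(x,\psi(x))$, the chain rule together with $F_y(x,\psi(x))=0$ gives the key identity $\bar F'(x)=\bar F_x(x)$. Because the singularity is isolated, $\bar F\neq 0$: if $\bar F=0$ then the identity forces $\bar F_x=\bar F'=0$ as well, so $F,F_x,F_y$ would all vanish along the one-dimensional branch $y=\psi(x)$, contradicting that $V(I(F))$ is zero-dimensional. Hence $\bar F$ has positive order, and in characteristic zero ${\rm ord}_x\bar F={\rm ord}_x\bar F'+1={\rm ord}_x\bar F_x+1$, so that $\bar F\in(\bar F_x)\subseteq k[[x]]$. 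Pulling this back gives $F\in(F_x,F_y)\widehat{A}_{\fq}$ and therefore $F\in(F_x,F_y)_{\fq}$.

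I expect the main obstacle to be exactly the cusp case: unlike the node, the quadratic part $y^2$ is degenerate, so $(F_x,F_y)_{\fq}$ is not simply $\fq_{\fq}$ and a direct Nakayama reduction fails for arbitrary higher-order terms $h$. The device that makes it work is the differentiation identity $\bar F'=\bar F_x$ along the branch $F_y=0$, which turns the membership $F\in(F_x,F_y)$ into the elementary order comparison ${\rm ord}\,\bar F={\rm ord}\,\bar F_x+1$. The two delicate points are the nonvanishing $\bar F\neq 0$, which relies on isolatedness of the singularity, and the use of characteristic zero, without which the order could drop by more than one under differentiation and the order comparison would break down.
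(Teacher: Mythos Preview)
Your argument is correct. The reduction via Corollary~\ref{LTcriterion} and Theorem~\ref{LE} to showing $F\in J(F)_{\fq}$ is exactly what the paper does, and your nodal case is the same computation as the paper's, only phrased through Nakayama on $\fq/\fq^2$ rather than written out explicitly.

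The cusp case, however, is handled by a genuinely different method. The paper pins down the intersection multiplicity $k={\rm mult}_P(X,L)$ with the tangent line $L=V(y)$, uses it to extract the normal form $F=y^2-x^k+yg+h$, and then shows by an explicit elimination that $J(F)_{\fq}=(x^{k-1},y)_{\fq}$, a visible complete intersection containing $F$. Your route instead passes to $k[[x,y]]$, invokes Weierstrass preparation on $F_y$ to obtain a branch $y=\psi(x)$, and reduces the question to the single-variable identity $\bar F'=\bar F_x$ together with the order comparison ${\rm ord}\,\bar F={\rm ord}\,\bar F_x+1$. What the paper's approach buys is concreteness: one sees the actual generators of $J(F)_{\fq}$ and reads off $\mu=\tau=k$. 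What your approach buys is uniformity: there is no case split on $k$, and the argument goes through verbatim even when $y\mid F$ (so that ${\rm mult}_P(X,L)=\infty$), a boundary case the paper's normal-form computation does not explicitly treat. The two points you flag as delicate --- $\bar F\neq 0$ from isolatedness (else $I(F)\widehat{A}_{\fq}\subseteq(y-\psi(x))$, a height-one prime) and characteristic zero for the order drop under differentiation --- are exactly the hinges of your argument, and both are sound.
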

\begin{proof}
Let $X\subset \pp_k^n$ stands for a projective  curve defined by reduced  homogeneous polynomial $f\in R=k[x,y,z]$ of degree $d\geq 3$.  Let $P\in \pp_k^2$ be a singular point of $X$. Denote by $\fq\in R$ the prime ideal of $P$. By projective transformation we may assume that $P=[0:0:1]$ and then $\fq=(x,y)$. We apply Corollary \ref{LTcriterion}, so that we prove  the affine equation of nodal and cuspidal curves  are  locally Eulerian at singular point $\fq=(x,y)$. 

The affine equation of nodal curve is $F(x,y)=xy+h(x,y)$
, where $h(x,y)$ has initial degree at least 3. The Jacobian ideal of $F$ is 
$$J(F)=(x+\partial h/\partial x,\ y+\partial h/\partial y).$$
We can write 
\[\partial h/\partial x=yG_1(x,y)+G_2(x)\quad ,\quad \partial h/\partial y=xL_1(x,y)+L_2(y), \]
where $G_1(0,0)=L_1(0,0)=0$ and $G_2\in k[x], L_2\in k[y]$ have initial degree at least 2. Thus
\[J(F)=(y(1+G_1(x,y))+G_2(x), x(1+L(x,y))+L_2(y)).\] 
Note that $1+G_1$ and $1+L_1$ are unit locally at $\fq$. Thus the gradient ideal locally at $\fq$ is generated by polynomials $x-\alpha L_2(y), y-\beta G_2(x) $, with $\alpha,\beta $ unit and $G, L$ of initial degree at least $2$. We claim that in $k[x,y]_{\fq}$:
 $$(x-\alpha L(y), y-\beta G(x))_{\fq}=(x,y)_{\fq}.$$ 
We can write $L(y)=y\ell(y)$ and $G(x)=x\wp(x)$. We have
$$x(1-\alpha\beta\ell(y)\wp(x))=(x-\alpha y\ell(y))+\alpha\ell(y)(y-\beta x\wp(x))$$
and $$y(1-\alpha\beta\ell(y)\wp(x))=(y-\beta x\wp(x))+\beta \wp(x)(x-\alpha y\ell(y)).$$
Since the element  $1-\alpha\beta\ell(y)\wp(x)$ is unit locally at $\fq$ this proves the claim. Therefore, $F$ is locally Eulerian. 

For a cuspidal curve the affine equation is given by $F(x,y)=y^2+h(x,y)$, where $h(x,y)$  has initial degree at least 3. The tangent line at point $P$ is $L=V(y)$ and we assume that ${\rm mult}_P(X\cap L)=3$. This condition means that $y$ is not divisor of 
$$F_3=d_1x^3+d_2x^2y+d_3xy^2+d_4y^33.$$
Then $d_1\neq 0$ and we may assume that $d_1=-1$. This gives 
\[F(x,y)=y^2-x^3+yg(x,y)+h(x,y),\]
where $g(x,y)$ is homogeneous of degree $2$ and the initial degree of $h(x,y)$ is at least $4$, as the affine equation of a simple cusp. 
 The Jacobian ideal of $F$ is generated by 
 \begin{eqnarray}
\nonumber \partial F/\partial x&=& -3x^2+y\partial g/\partial x+\partial h/\partial x,\\
\nonumber  \partial F/\partial y&=& 2y+g(x,y)+ y\partial g/\partial y+\partial h/\partial y.
 \end{eqnarray}
We can write 
\[\partial h/\partial x=x^2G_1(x,y)+xG_2(y)+G_3(y)\quad, \quad \partial h/\partial y= yH_1(x,y)+H_2(x), \]
where $G_1(0,0)=H_1(0,0)=0$, the initial degree  of $G_2(y)$ is at least $2$ and the initial degree of $G_3(y)$ and $H_2(x)$ is at least $3$. Using these relation and renaming $G_i$ and $H_i$, one has 
\begin{eqnarray}
\nonumber  \partial F/\partial x&=& -3x^2+ x^2G_1(x,y)+xG_2(y)+G_3(y),\\
\nonumber  \partial F/\partial y&=& 2y+yH_1(x,y)+H_2(x),
 \end{eqnarray}
where $G_1(0,0)=H_1(0,0)=0$, the initial degree of $G_2(y)$ is at least $1$, the initial degrees of $G_3(y)$ and $H_2(x)$ is at least $2$. 

We have 
\[ J(F)=\left(x^2(-3+G_1(x,y))+xG_2(y)+G_3(y),\ y(2+H_1(x,y))+H_2(x)\right).\]
Note that the elements $-3+G_1(x,y),\ 2+H_1(x,y)$ are unit locally at $\fq$. Thus the gradient ideal ideal $J(F)$ locally at $\fq$ is generated by $x^2-\alpha(xG_2(y)+G_3(y)), \ y-\beta H_2(x).$
We claim that 
\[(x^2-\alpha(xG_2(y)+G_3(y)), \ y-\beta H_2(x))_{\fq}=(x^2,y)_{\fq}. \]
The condition on initial degree of $G_2,G_3,H_2$ implies the one side inclusion. For the converse, by condition on initial degree, we write 
\[xG_2(y)+G_3(y)=y(xL_2(y)+L_3(y))\quad,\quad H_2(x)=x^2Q(x). \]
We have 
\begin{eqnarray}
\nonumber Ux^2 &=& x^2-\alpha(xG_2(y)+G_3(y))\  +\ \alpha (xL_2(y)+L_3(y))(y-\beta H_2(x))\\
\nonumber Uy &= &y-\beta H_2(x)\ +\ \beta Q(x)(x^2-\alpha(xG_2(y)+G_3(y))  
\end{eqnarray}
 Since the element $U=(1-\alpha\beta Q(x)(xL_2(y)+L_3(y)))$ is unit locally ay $\fq$, this complete the assertion. 
 
If ${\rm mult}_P(X\cap L)=k>3$ then $y$ is not a divisor of $F_k$, homogeneous polynomial of degree $k$, then the coefficient of monomial $x^k$ is nonzero and we may assume that it is $-1$. This gives 
\[F(x,y)=y^2-x^k+yg(x,y)+h(x,y)\]
where $g(x,y)$ is homogeneous of degree $k-1$ and the initial degree of $h(x,y)$ is at least $k+1$. The same arguments as $k=3$ shows that in this case the gradient ideal locally at singular point $\fq=(x,y)$ is generated by $(x^{k-1},y)$ which is a complete intersection in $k[x,y]_{\fq}$.  
\end{proof}
\begin{Corollary}\label{conic-cubic}
Any projective cubic curve in $\pp_k^2$ is of gradient linear type. 
\end{Corollary}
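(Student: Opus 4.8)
The plan is to reduce the statement to Theorem \ref{nodal-cuspidal} and Theorem \ref{LE} by running through the short list of singularities that a reduced plane cubic can carry. If $X$ is nonsingular it is of gradient linear type, as recorded at the start of this section, so I assume $X$ is singular; being a plane curve, its singular locus is a finite set of points. By Corollary \ref{LTcriterion} it is enough to prove that at each singular point $P$ the dehomogenized equation $F$ is locally Eulerian at the corresponding maximal ideal $\fq$ (equivalently, by Theorem \ref{LE}, that $I(F)$ is a complete intersection locally at $\fq$). After a projective change of coordinates I place $P=[0:0:1]$ and write $F(x,y)=f(x,y,1)=F_m+\cdots$ with lowest term of degree $m={\rm m}_P(f)\in\{2,3\}$; since $\deg F\le 3$ this already forces strong restrictions.

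If $m=3$ then $F=F_3$ is a homogeneous cubic, so $F$ is quasi-homogeneous and the Euler relation $3F=xF_x+yF_y$ shows $F\in J(F)_{\fq}$; thus the ordinary triple point (three distinct concurrent lines) is locally Eulerian. If $m=2$ the tangent cone $F_2$ either splits into two distinct linear forms, giving a node, or is a perfect square $F_2=y^2$. In the latter case, when the coefficient of $x^3$ in $F_3$ is nonzero one checks that the tangent line meets $X$ at $P$ with multiplicity exactly $3$, so $P$ is a simple cusp. Both the node and the simple cusp are precisely the situations treated in Theorem \ref{nodal-cuspidal}, which shows $F$ is locally Eulerian at $\fq$.

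The one remaining and genuinely delicate possibility is $m=2$ with $F_2=y^2$ but the coefficient of $x^3$ in $F_3$ equal to zero. Then $y\mid F_3$, so $F=y(y+q)$ with $q$ a quadratic form, and homogenizing back gives $f=y\,(yz+q)$, whence $y\mid f$; hence $X$ is globally the union of a line and a conic tangent to it, and $P$ is a tacnode. This case is not covered by Theorem \ref{nodal-cuspidal}, since the tangent line is now a component of $X$, and it is where the main work lies. The key observation is that such a configuration is unique up to projective equivalence: a smooth conic together with one of its tangent lines can be normalized to $f=y(yz-x^2)$, whose affine equation $F=y^2-x^2y$ is quasi-homogeneous of weighted degree $4$ with weights $(1,2)$. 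The weighted Euler relation $4F=xF_x+2yF_y$ then exhibits $F\in J(F)_{\fq}$, so the tacnode is locally Eulerian as well. (Alternatively one checks directly that $\mu(f)=\tau(f)=3$ at $P$.)

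Since in every case, triple point, node, simple cusp, and tacnode, the affine equation $F$ is locally Eulerian at the relevant singular prime, Theorem \ref{LE} makes $I(F)$ a local complete intersection there, and Corollary \ref{LTcriterion} concludes that $X$ is of gradient linear type. The principal obstacle is precisely the tacnode: it is the unique reduced-cubic singularity that is neither a node nor a cusp, it escapes the hypotheses of Theorem \ref{nodal-cuspidal} because its tangent line lies on $X$, and it is handled only by the separate normalization and quasi-homogeneity computation above.
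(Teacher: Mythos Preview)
Your proof is correct. Both you and the paper reduce to Theorem~\ref{nodal-cuspidal} together with a direct locally Eulerian check for the cases that theorem does not cover, but the case analyses are organized differently: the paper splits according to the global decomposition of the cubic (irreducible; smooth conic plus a secant or tangent line; three lines, concurrent or not) and then names the resulting singularity types, whereas you work locally at each singular point and classify by the multiplicity $m\in\{2,3\}$ and the tangent cone. Your local approach is in fact more careful on one point: the paper labels the singularity of a conic plus a tangent line a ``cusp'' and invokes Theorem~\ref{nodal-cuspidal}, but since the tangent line is then a component of $X$ the intersection multiplicity ${\rm mult}_P(X\cap L)$ is not finite and the argument in that proof does not literally apply; you correctly isolate this tacnode and dispatch it via the quasi-homogeneity of $F=y^2-x^2y$. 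For the ordinary triple point the paper normalizes the three concurrent lines to $f=(y-z)(z-x)(x-y)$ and checks directly, which amounts to the same Euler-relation observation you make for $F=F_3$.
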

\begin{proof}
If $X$ is irreducible singular cubic then $X$ has at most one singular point of multiplicity 2 which is either a node or a cusp. Thus by Theorem \ref{nodal-cuspidal}, $X$ is of gradient linear type.

If $X$ is a reducible  cubic then it is of the form an irreducible conic and a line not tangent to conic  or of the form an irreducible conic and a line tangent to conic. In two cases, the reducible cubic has singular points of type node and cusp, respectively, hence is of gradient linear type. 

If $X$ is a cubic which reduced to three non-concurrent line then $X$ has $3$ singular points of type node. Assume that  $X=V(f)\subseteq \pp_k^2$ is a cubic which  reduced to three concurrent line. Thus it is projectively equivalent to $f=(y-z)(z-x)(x-y)$. An easy calculation show that the affine curve  is locally  Eulerian at singular point. 
\end{proof}

\begin{Remark}\label{germs-analytical}\rm
\begin{enumerate}
\item 
The local plane curve calculation in Nodal case is as same as one which A. Simis in \cite{ASimis} used to prove that $\depth(R/J(f))=0$. 
\item
Suppose that $X=V(f)\subseteq \pp_k^2$ is projective plane curve of multiplicity ${\rm m}_p(f)=2$ at the point $P=[0:0:1]$. The affine curve $X_z:=V(F(x,y))$ is an analytic subset of $\AA_k^2$ and $(X_z,(0,0))$ is an analytic set germ. If this set germ is analytically equivalent to $(V(x^{k+1}-y^{2}),(0,0))$ for some $k\in \mathbb{N}$, then $P$ is called an ${\rm A}_k$ singularity of $X$. By [Theorems 2.46 and 2.48]\cite{MCE} and the proof of Theorem \ref{nodal-cuspidal}, nodes singularities are ${\rm A}_1$, ordinary cusps are ${\rm A}_2$ and cusps with condition ${\rm mult}_P(X\cap L)=k$ are $A_k$ for $k\geq 2$, where $L$ is tangent line at $P$. The Theorem  \ref{nodal-cuspidal} shows that any projective plane curve only with  ${\rm A}_k$ singularity is of gradient linear type.
\item If $X=V(f)\pp_k^2$ is Nodal or Cuspidal projective curve then the Aluffi algebra of $J(f)/(f)$ is isomorphic with the symmetric algebra and it is Cohen-Macaulay \cite[Theorem 3.3 ]{AA}. 
\end{enumerate}
\end{Remark}
\section*{Acknowledgement} I would like to thanks  Aron Simis for reading the manuscript and helpful comments on the subject to make the paper more comprehensive. 

\end{document}